\tikzstyle{vertex}=[circle,draw=black,fill=black,inner sep=0,minimum size=3pt,text=white,font=\footnotesize]
\newtheorem{theorem}{Theorem}
\newtheorem*{conjecture*}{Conjecture}
\newtheorem{lemma}{Lemma}
\theoremstyle{remark}
\newtheorem{definition}{Definition}[section]
\newtheorem*{remark*}{Remark}
\newcommand{\vs}{\vspace{3mm}}
\newcommand{\hs}{\hspace{1mm}}
\newcommand{\mc}{\mathcal}
\newcommand{\ep}{\epsilon}
\newcommand{\lam}{\lambda}
\newcommand{\sub}{\subseteq}
\newcommand{\al}{\alpha}
\newcommand{\wt}{\widetilde}
\newcommand{\subtext}{\underset}
\DeclareMathOperator{\Maj}{Maj}
\title{Approximate trace reconstruction of random strings from a constant number of traces}
\author{Zachary Chase \and Yuval Peres}
\thanks{The first author is partially supported by Ben Green's Simons Investigator Grant 376201 and gratefully acknowledges the support of the Simons Foundation. Research of Y.P. was partially supported by NSF grant DMS-1900008.}
\newcommand{\Addresses}{{
  \bigskip
  \footnotesize

  \textsc{Zachary Chase, University of Oxford}
  
  \par\nopagebreak
  \textit{Email}: \texttt{zachary.chase@maths.ox.ac.uk}

  \medskip
  
  \vspace{1.5mm}

  Yuval Peres, \textsc{Kent State University}\par\nopagebreak
  \textit{Email}: \texttt{yuval@yuvalperes.com}
}}
\date{July 8, 2021}
\begin{document}

\begin{abstract}
In the trace reconstruction problem, the goal is to reconstruct an unknown string $x$ of length $n$ from multiple traces obtained by passing $x$ through the deletion channel. In the relaxed problem of {\em approximate} trace reconstruction, the goal is to reconstruct an approximation $\widehat{x}$ of $x$ which is close (within $\ep n$) to $x$ in edit distance. We show that for most strings $x$, this is possible with high probability using only a constant number of traces. Crucially, this constant does not grow with $n$, and only depends on the deletion probability and $\ep$. 
\end{abstract}

\maketitle

\section{Introduction}

Given a string $x \in \{0,1\}^n$, a \textit{trace} of $x$ is obtained by deleting each bit of $x$ with probability $q$, independently, and concatenating the remaining string. For example, a trace of $11001$ could be $101$, obtained by deleting the second and third bits. The goal of the trace reconstruction problem is to determine an unknown string $x$, with high probability, by looking at as few independently generated traces of $x$ as possible. 

\vs

The problem just described, known as worst-case trace reconstruction for the deletion channel, has been extensively studied, with the current best upper bound being $\exp(\wt{O}(n^{1/5}))$ \cite{chasetrupper} and current best lower bound being $\wt{\Omega}(n^{3/2})$ \cite{chasetrlower}. 

\vs

Average-case trace reconstruction, when one wants to reconstruct a uniformly randomly chosen string from few traces, has also been analyzed. The current best upper bound is $\exp(O(\sqrt[3]{\log n}))$ \cite{hppz}, and the current best lower bound is $\wt{\Omega}(\log^{2.5}n)$ \cite{chasetrlower}.

\vs

A key motivation for the trace reconstruction problem is its application to DNA  reconstruction.   In practice, often the number of traces  available does not allow exact reconstruction, and it is natural to study what information can be inferred about the unknown string. Mitzenmacher \cite{mitzenmacher} proposed the problem of ``approximate trace reconstruction", of trying to reconstruct $x$ but allowing some error. Davies, R\'{a}cz, Rashtchian, and Schiffer \cite{drrs}, considered this problem, with the notion of ``error" being ``edit distance". So, given few traces, one wishes to output a string that is close in edit distance to the unknown one. 

\begin{definition}
The \textit{edit distance} between two strings $x,y \in \{0,1\}^{< \infty}$, denoted $d_e(x,y)$, is the minimum number of deletions and insertions needed to turn $x$ into $y$. Alternatively, if $w$ is a longest common subsequence of $x$ and $y$, then $d_e(x,y) := |x|+|y|-2|w|$ (where $|\cdot|$ denotes length). Note that $d_e$ is sometimes called the ``insertion-deletion distance". A variant that also allows for substitutions is often used; this differs from $d_e$ by at most a factor of 2 and thus is equivalent for our purposes.
\end{definition}

\vspace{1.5mm}

As noted above, \cite{drrs} considered approximate trace reconstruction. That paper gave upper bounds for   the number of traces required if the unknown string has long runs, and general lower bounds. 

\vs

In this paper, we study the ``average-case" approximate trace reconstruction question. 

\vs

\noindent{\bf Definition.} We say that $T$ traces suffice to \textit{$(q,\ep,\delta)$-approximately reconstruct a random $n$-bit string} if there exists a function $F: (\{0,1\}^{\le n})^T \to \{0,1\}^{<\infty}$ with the following property: If $x$ is chosen uniformly at random from $\{0,1\}^n$ and $\vec{U} := (U^{(1)},\dots,U^{(T)})$ is a sequence of random traces of $x$ (obtained from independent applications of the deletion channel with deletion probability $q$), then with probability at least $1-\delta$, we have $d_e(x,F(\vec{U})) < \epsilon n$. 

\vs
The point of our main theorem, that we now state, is that if $q, \delta$, and $\epsilon$ are constant, then $T$ can be chosen to be a constant (independent of $n$). 

\vs

\begin{theorem}\label{main}
For all $q,\delta \in (0,1)$, there is some $C = C(q,\delta) \ge 1$ so that for any $\ep > 0$ and any $n \ge 1$, it holds that $T := \exp(C\sqrt[3]{\log(1/\ep)})$ traces suffice to $(q,\ep,\delta)$-approximately reconstruct a random $n$-bit string.
\end{theorem}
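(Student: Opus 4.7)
The plan is to reduce the task to many instances of exact reconstruction of $\mathrm{poly}(1/\epsilon)$-length random substrings, to which an HPPZ-style algorithm (needing $\exp(O(\sqrt[3]{\log m}))$ traces for a length-$m$ random string) can be applied locally. Fix $m = m(\epsilon)$ polynomial in $1/\epsilon$, so that $\log m = \Theta(\log(1/\epsilon))$ and the HPPZ bound matches the target $T$. Conceptually partition $x$ into consecutive blocks $B_1,\dots,B_N$ of length $m$ with $N = n/m$; the goal is to reconstruct each $B_i$ from the corresponding window of each of the $T$ traces, and then concatenate the results.

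The primary challenge is alignment: in each trace, one must identify which substring came from block $B_i$. The approach is to search $x$ for \emph{anchor patterns}, i.e., distinctive bit-patterns of length $k = \Theta(\log m)$ whose expected number of occurrences in $x$ is $\Theta(n/m)$. For typical random $x$, standard tail bounds on the geometric spacing between occurrences show these anchors partition $x$ into chunks of length $\Theta(m)$ with high probability. An anchor of length $k$ survives the deletion channel with probability $(1-q)^k = \mathrm{poly}(\epsilon)$, independent of $n$, so a constant fraction of anchors appear in each trace. Crucially, surviving anchors appear in the traces in the same relative order as in $x$ and, by augmenting with short contextual fingerprints if necessary, can be matched across the $T$ traces with high probability.

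Once anchors are matched, each $B_i$ corresponds, in each trace, to a window bounded by consecutive surviving anchors. Running the HPPZ algorithm on these windowed sub-traces reconstructs $B_i$ with high probability, and concatenating gives a candidate $\widehat{x}$. Its edit distance from $x$ is bounded by $O(m)$ times the number of blocks on which alignment or local reconstruction fails, plus small slack from the anchor patterns themselves and from boundary shifts between windows. Choosing the per-block failure probability to be $O(\epsilon)$ via mild amplification inside $T$, and taking a union bound over the $N$ blocks, yields expected edit distance $O(\epsilon n)$, as required.

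The main obstacle is executing the alignment step with only a constant number of traces. Anchors must simultaneously be rare enough to match unambiguously between $x$'s anchor sequence and each trace's surviving anchor sequence, dense enough that nearly every block contains one, and short enough that they survive the deletion channel with non-negligible probability. Since we cannot amplify by taking $T$ large, the anchor analysis must exploit randomness of $x$ rather than concentration across traces. Balancing these tensions, and carefully bookkeeping edit-distance contributions from rare failed blocks, alignment boundary mismatches, and the anchor substrings themselves, is the technical heart of the proof.
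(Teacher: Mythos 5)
There is a genuine gap, in two places, both in the step you yourself identify as the technical heart. First, your anchors are generic ``distinctive bit-patterns'' of length $\Theta(\log m)$, which are only recognizable in a trace if they pass through the channel intact; that happens with probability $(1-q)^{\Theta(\log(1/\ep))} = \mathrm{poly}(\ep)$ per trace. But to run an HPPZ-style reconstruction of a block you must locate its delimiting anchor in $\Omega(T)$ of the traces simultaneously, so with your design either the usable anchors become far too sparse (probability $\mathrm{poly}(\ep)^{\Theta(T)}$ of being visible in enough traces) or you must inflate $T$ by a $\mathrm{poly}(1/\ep)$ factor, which destroys the claimed bound $T = \exp(C\sqrt[3]{\log(1/\ep)})$. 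The paper avoids this by choosing anchors that are robust to deletions rather than merely distinctive: a long run of $0$s with a single $1$ in the middle. A shortened $0$-run is still a $0$-run, so the anchor remains identifiable in a trace whenever the single $1$ survives, i.e.\ with \emph{constant} probability $p=1-q$; this is the design point your sketch misses.

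Second, and more fundamentally, you have no mechanism for deciding \emph{which} anchor occurrence you are looking at. Anchor occurrences are spaced $\mathrm{poly}(1/\ep)$ apart, while positions in a trace fluctuate around $p\cdot(\text{position in }x)$ by $\Theta(\sqrt{n})$, so position cannot disambiguate them; the pattern itself (length $\Theta(\log(1/\ep))$) recurs $\Theta(n/\mathrm{poly}(1/\ep))$ times, so content cannot either; and since each trace retains a different random subset of anchors, ``same relative order'' does not yield a matching. A ``contextual fingerprint'' long enough to be location-specific needs $\gtrsim \log n$ bits, but you only see a deletion-corrupted version of it, and comparing corrupted fingerprints across traces is exactly the alignment problem you are trying to solve. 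The paper's answer is an iterative multi-scale alignment of the traces to one another via a majority-bit block test (its Sections 5--8): starting at scale $\sqrt{n}$ and repeatedly taking square roots down to scale $K_2 = \mathrm{poly}(1/\ep)$, it pins down corresponding windows in all traces to within $K_2^{1-\beta/2}$, and only then searches for an anchor inside that small window. Without this (or some substitute that copes with $\Theta(\sqrt{n})$ positional uncertainty using $O(1)$ traces), the reduction to local HPPZ reconstruction does not go through; the rest of your outline (local exact reconstruction of $\mathrm{poly}(1/\ep)$-length chunks via HPPZ with a deterministic prefix, and edit-distance bookkeeping over failed blocks) does match the paper's plan.
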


\vs

We wish to note that our theorem does apply for any $\epsilon$ as a function of $n$.

\vs

Our theorem recovers the non-computational part of the main result of \cite{bls}, that there is a binary code of rate $1-\ep$ that is recoverable from $\exp(C\sqrt[3]{\log(1/\ep)})$ traces. See Section 11 for more details.

\vs

\section{Related Work}

Holenstein, Mitzenmacher, Panigrahy, and Wieder \cite{hmpw} established  that $\exp(\wt{O}(n^{1/2}))$ traces suffice for worst-case trace reconstruction of an $n$-bit string. Nazarov and Peres \cite{nazarovperes} and De, O'Donnell, and Servedio \cite{dds} simultaneously obtained that $\exp(O(n^{1/3}))$ traces suffice. This was improved to 
$\exp(\wt{O}(n^{1/5}))$ by Chase~\cite{chasetrupper}.
\vs

For the same problem, Batu, Kannan Khanna, and McGregor~\cite{bkkm} proved a lower bound of $\Omega(n)$, which was improved to $\wt{\Omega}(n^{5/4})$ by Holden and Lyons \cite{holdenlyons}, and then to $\wt{\Omega}(n^{3/2})$ in \cite{chasetrlower}.

\vs

For average-case trace reconstruction,   a polynomial upper bound   was proved   in \cite{hmpw} for sufficiently small $q$. Peres and Zhai \cite{pereszhai} obtained a sub-polynomial upper bound of $\exp(O(\log^{1/2} n))$ for $q < \frac{1}{2}$ which was then improved to $\exp(O(\log^{1/3} n))$ for all (constant) $q$ by Holden, Pemantle, Peres, and Zhai \cite{hppz}. 

\vs

We will need a slight generalization of the result \cite{hppz}. We need the result to apply to the case that the unknown string is a concatenation of a short deterministic string followed by a random string. We also need the statement that $\exp(C\log^{1/3}k)$ traces suffice to reconstruct the first $k$ bits of the unknown (pseudorandom) string.

\vs

\begin{lemma}\label{hppzgeneralization}
For all $q \in (0,1)$ and $C' \ge 1$, there is some $C \ge 1$ so that the following holds for all $1 \le k \le n$. Let $w$ be a (deterministic) string of length at most $C'\log k$. Let $x$ be a string formed by appending $n-|w|$ uniformly random bits from $\{0,1\}$ to $w$. Then, with probability at least $1-1/k$ over the choice of $x$ and the draw of $\exp(C\log^{1/3}k)$ traces, we can reconstruct the first $k$ bits of $x$. 
\end{lemma}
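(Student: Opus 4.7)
The plan is to run the HPPZ bit-by-bit reconstruction algorithm starting from position $|w|+1$, treating the deterministic prefix $w$ as the initial ``already-reconstructed" portion of $x$. The HPPZ result gives bit-by-bit reconstruction of a uniformly random $n$-bit string from $\exp(O(\log^{1/3}n))$ traces; for our setting, three refinements are needed: (a) replace $n$ by $k$ in the trace count, since we only need the first $k$ bits; (b) drive the per-bit failure probability down to $k^{-2}$ so that a union bound yields total failure at most $1/k$; and (c) confirm that the deterministic prefix $w$ of length $O(\log k)$ does not spoil the probabilistic analysis.

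For (a) and (b), I would use that the HPPZ bit-determination subroutine has failure probability that decays quantitatively in the number of traces $T$ (through Chernoff-type concentration of the empirical statistics they work with). Taking $T = \exp(C\log^{1/3}k)$ with $C = C(q, C')$ sufficiently large forces the per-bit failure probability down to at most $k^{-2}$, and a union bound over the at most $k$ reconstruction steps $|w|+1, \ldots, k$ (conditioning on all prior bits being correct) yields the desired $1-1/k$ success.

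For (c), I would verify that the HPPZ argument is \emph{local}: determining bit $x_{i+1}$ uses only the randomness of bits in a window of size $\mathrm{polylog}(k)$ around position $i+1$. Once $i+1 > |w| + \mathrm{polylog}(k)$, this window lies entirely inside the random portion of $x$ and HPPZ applies verbatim. For the $O(\log k)$ bits just past $w$, one conditions on the value of $w$; the remaining bits are still uniformly random, and pinning down $O(\log k)$ previously-random bits alters the relevant Fourier/polynomial-method estimates by at most a multiplicative constant, which is absorbed into $C$.

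The main obstacle is auditing the HPPZ proof to confirm both the locality property and the quantitative per-bit concentration bound. Their argument is organized as an iterative peeling procedure built from polynomial-method ingredients that each concern a short window around the bit being recovered, so locality is essentially built-in; nonetheless, writing out the explicit dependence on $|w|$ and on the sharper per-bit failure probability requires some bookkeeping. Once this is verified, the lemma follows from the per-bit success bound and a union bound.
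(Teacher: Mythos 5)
Your overall route---adapt the HPPZ proof, union-bound over the first $k$ bits, and argue that the deterministic prefix is harmless---is the same as the paper's, but the step where you actually deal with the prefix has a genuine gap. You handle the positions just past $w$ by ``conditioning on $w$'' and asserting that this changes the relevant estimates by at most a multiplicative constant. Conditioning on $|w| = C'\log k$ specified bits can inflate the probability of a bad event by a factor of $2^{|w|} = k^{C'}$, not a constant; worse, for the bits immediately after $w$ the window that the HPPZ procedure would use consists mostly (or entirely) of $w$ itself, and once those bits are deterministic there is no residual randomness left for any conditioning bound to exploit: a periodic or constant $w$ can make a content-based matching test fire falsely with conditional probability close to $1$, even though the unconditional probability is tiny. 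So the argument as you state it does not go through. What actually rescues the lemma---and what the paper points to---is structural rather than a conditioning calculation: in HPPZ the initial segment is reconstructed by worst-case (mean-based) methods that never use the randomness of the string, and for bits $\ell < \log^{5/3}(k)$ the coarse alignment is positional, estimating the trace location as $p\ell$, with an error exceeding $\log k$ having probability at most $\exp(-c\log^{1/3}k)$ by Hoeffding. Hence the randomness of the first $O(\log k)$ bits is simply never used, and a deterministic $w$ there is harmless; content-based matching only enters for $\ell > \log^{5/3}(k)$, where the relevant windows lie far from $w$ and your ``locality'' observation applies.

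A second, smaller omission: to ``replace $n$ by $k$'' it is not enough that you only want the first $k$ bits, because the traces come from an $n$-bit string, so the search and union bound in the first alignment step would a priori scale with $n$. The paper handles this by applying the alignment test only to blocks contained in the first $2k$ bits of each trace. Your plan of driving the per-bit failure probability down to $k^{-2}$ and union-bounding over the at most $k$ reconstruction steps is fine once these two points are repaired.
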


\vs

The generalization to allow a deterministic prefix follows straightforwardly from the proof given in \cite{hppz}, since they use worst-case methods to reconstruct the prefix and don't use the prefix afterwards. Indeed, for coarse  alignment of bit $\ell$ in $x$ for $\ell< \log^{5/3}(k)$, estimating the corresponding location in the trace to be $p\ell$ incurs an error greater than $\log k$ with probability at most $\exp(-c \log^{1/3} (k))$ by Hoeffding's inequality (see also page 36 in arXiv version 1 of \cite{hppz}). The generalization to reconstructing the first $k$ bits of an $n$-bit string follows from the proof given in \cite{hppz} by applying the test in the first alignment step only to blocks contained in the first $2k$ bits of the trace, say. 

\vs

Holden and Lyons \cite{holdenlyons} proved a lower bound for this random variant of $\wt{\Omega}(\log^{9/4}n)$, which was then improved to $\wt{\Omega}(\log^{5/2}n)$ in \cite{chasetrlower}.

\vs

Several other variants of the trace reconstruction problem have been considered. 

\vs

Coded trace reconstruction has been studied, in which one wants to find a (large) subset of strings so that trace reconstruction is easier when the unknown string is promised to be among this subset. See \cite{cgor},\cite{bls}.  

\vs

The population recovery variant of trace reconstruction has been studied, in which there is an unknown distribution over the unknown string (usually of small support) and one wants to determine this distribution up to small error given many random traces of a random string chosen according to this distribution. See \cite{bcfrs}, \cite{bcss}, \cite{narayanan}.  

\vs

A smoothed variant of trace reconstruction,  interpolating between worst-case and average-case trace reconstruction, has been considered in \cite{cdlss}. In this variant, the input $x$ to the deletion channel is a random perturbation of an unknown arbitrary string $X_*$, and we want to reconstruct $x$ from   random traces of it.  

\vs

Circular trace reconstruction, in which the unknown string undergoes a random cyclic shift before each time it is   passed through the deletion channel,  was analyzed in \cite{circulartr}.

\vs

One of the main reasons for study of trace reconstruction is its applications to biology. Trace reconstruction has found useful connections with DNA data storage \cite{lcaplus} and immunogenomics \cite{bfrps}. See \cite{bprs} for a nice survey. See \cite{katoh} for a focus on multiple alignment of DNA sequences. 

\vs

There have also been generalizations to objects other than strings. For matrices, see \cite{kmmp}. For trees, see \cite{drr}, \cite{mr}, and \cite{br}. 

\vs

It is natural to ask how effectively one can do trace reconstruction to distinguish pairs of strings with small Hamming or edit distance. For this topic,  and for the limitations of mean-based arguments, see \cite{simabruck}, \cite{cdrv}, \cite{gsz}. 

\vs

\section{Notation}

We use $C_0,C_1,\dots,K_0,K_1,\dots$ to denote large constants, and $c_0,c_1,\dots,\kappa_0,\kappa_1,\dots$ to denote small constants. They are functions of $q,\delta,\ep$ but not of $n$; they are only polynomial in $1/\ep$. We reserve $C,c$ for constants that we do not wish to emphasize, that change from line to line.

\vs

Let $x[a:b]$ denote the contiguous substring $x_a,\dots,x_{b-1}$. For an interval $I \sub [a,b]$, the length of $I$ is defined to be $b-a+1$. For a string $W$, define $\Maj(W)$ to be $1$ if there are strictly more $1$s than $0$s in $W$, and $0$ otherwise.

\vs

Let $x$ be a string. For a trace $U$ of $x$ and an index $i \in |U|$, let $g_U(i)$ be the index in $x$ that was sent to index $i$ under the deletion channel. To fix notation, a trace $U$ of $x$ is determined by the values of the ``retention variables" $(r_i^U)_{i=1}^n$, where $r_i^U$ is $1$ if bit $i$ in $x$ is kept, and $0$ otherwise. Note that $g_U(j) = l$ if and only if $\sum_{i=1}^l r_i^U = j$. 

\vs

We will mostly use $\mathbb{P}$ to denote a probability. When we want to be more specific about exactly what the randomness present is, we will use the following notation. We use $\mathbb{P}^X$ to denote the uniform distribution over $\{0,1\}^n$ (i.e., the distribution according to which $x$ is chosen). We use $\mathbb{P}^R$ to denote the Bernoulli distribution over $\{0,1\}^n$ where $0$ has probability $1-p$ and $1$ has probability $p$ (i.e., the distribution of the retention variables). 

\vs

\section{Sketch of Argument}

The proof of Theorem \ref{main} uses three ingredients. 

\vs

The first is to align traces to each other. This means that, for an index $j$ in a trace, we want to locate, in each of the other traces, an index $j'$ such that $j$ and $j'$ came from roughly the same location in the original string. We align the traces to each other by iterating a majority bit argument, capitalizing on the randomness of the unknown source string and the fact that two independently random strings won't have the same majority bit in various contiguous substrings noticeably more than half of the time.

\vs

The second ingredient is the use of \textit{anchors}, an idea first appearing in \cite{hmpw}. We will be reconstructing large chunks of the unknown string near locations where there is one large block of $0$s except for exactly one $1$, which is in the middle. The reason we look for such locations is that, if the $1$ survives the deletion channel, it is easily identifiable in the trace, and thus serves as a starting point for reconstruction. 

\vs

The third ingredient is that $\exp(C\log^{1/3}m)$ traces suffice to reconstruct the first $m$ bits of a random, unknown $n$-bit string with a small deterministic prefix. This is the content of Lemma \ref{hppzgeneralization} and follows essentially already from the work of \cite{hppz}.

\vs

\section{Test}

Let $\al \in (\frac{1}{2}-\frac{1}{100},\frac{1}{2})$ be a parameter to be chosen later. We are given two strings $U,V \in \{0,1\}^m$. We now describe whether the test declares a match between $U$ and $V$. Briefly, we divide each string into $m^\al$ blocks (each of length $m^{1-\al}$) and declare there to be a match if at least $\frac{1}{2}+\kappa_0$ proportion of the blocks have the same majority bit. 

\vs

Let $I_j = ((j-1)m^{1-\al},jm^{1-\al}]$ for $1 \le j \le m^\al$. Let $$\sigma_j = 1\Big[\Maj(U[I_j]) = \Maj(V[I_j])\Big].$$ We declare there to be a \textit{match} between $U$ and $V$ if $$\sum_{j=1}^{m^\al} \sigma_j > (\frac{1}{2}+\kappa_0)m^\al,$$ where $\kappa_0$ is a small positive constant, to be specified later.

\vs

\section{Properties of Test}

Fix parameters $\lambda \in (\frac{1}{2},1-\al)$ and $\beta \in (0,\frac{1-\al}{2})$ to be chosen later. 

\vs

We now introduce two lemmas. The first says that the probability that two blocks in traces do not match given that they come from nearly the same part of the unknown string is at most $\exp(-cm^{2\lam-1})$. The second lemma says that the probability that two blocks in traces do match given that they come from far away parts of the unknown string (measured by $\beta$) is at most $\exp(-cm^\al)$. 

\vs

\begin{lemma}\label{truematch}
Fix intervals $I,J \sub [n]$ with left endpoints denoted $i,j$, respectively. Let $x$ be a random string and $U,V$ random traces of $x$. Let $A_1$ denote the event that $|g_U(i)-g_V(j)| \le m^\lambda$. Then $$\mathbb{P}(U[I],V[J] \text{ match} | A_1) \ge 1-\exp(-cm^{2\lambda-1}).$$
\end{lemma}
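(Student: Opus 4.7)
The plan is to decompose the match test into its $m^\alpha$ constituent comparisons $\sigma_k := \mathbf{1}[\Maj(U[I_k]) = \Maj(V[J_k])]$ on blocks of length $L := m^{1-\alpha}$, show that each $\mathbb{E}[\sigma_k]$ exceeds $\tfrac{1}{2}$ by a $q$-dependent positive constant under the alignment assumption, and then apply Hoeffding-type concentration so that $\sum_k \sigma_k > (\tfrac{1}{2} + \kappa_0)m^\alpha$.

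\emph{Drift control.} Set $\delta_k := g_U(i+kL) - g_V(j+kL)$. Under $A_1$ we have $|\delta_0| \le m^\lambda$, and $\delta_k - \delta_0$ is the difference of two independent, centered sums of $kL$ iid Geometric$(1-q)$ variables (hence sub-exponential with variance $O(kL)$). Bernstein's inequality gives $\Prob(|\delta_k - \delta_0| > m^\lambda) \le \exp(-cm^{2\lambda-1})$ since $kL \le m$, and a union bound over $k \le m^\alpha$ (absorbed into the exponent because $\alpha\log m \ll m^{2\lambda-1}$ for large $m$) shows the event $\mathcal{G} := \{|\delta_k| \le 2m^\lambda \text{ for all } k\}$ fails with probability at most $\exp(-cm^{2\lambda-1})$ conditional on $A_1$.

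\emph{Per-block bias.} Condition on the full retention variables of $U$ and $V$, fixing the index sets $S_U^{(k)}, S_V^{(k)} \sub [n]$ of $x$-positions feeding into $U[I_k], V[J_k]$; set $s_k := |S_U^{(k)} \cap S_V^{(k)}|$. On $\mathcal{G}$ the two windows overlap in an interval of length at least $L/(2(1-q))$, within which doubly-retained positions arise at density $(1-q)^2$. A Chernoff bound combined with a union bound yields the event $\mathcal{G}' := \{s_k \ge (1-q)L/4 \text{ for all } k\}$ with failure $\le m^\alpha e^{-cL} \le e^{-cm^{2\lambda-1}}$ (using $L \gg m^{2\lambda-1}$). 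Decompose $Z_{U,k} = Y_k + X_{1,k}$, $Z_{V,k} = Y_k + X_{2,k}$, where $Y_k, X_{1,k}, X_{2,k}$ are the independent sums of $x$-bits over $S_U^{(k)} \cap S_V^{(k)}$, $S_U^{(k)} \setminus S_V^{(k)}$, $S_V^{(k)} \setminus S_U^{(k)}$. Conditioning on $Y_k$ and setting $p_k(y) := \Prob(\widetilde X_{1,k} > -y)$ (tildes are centerings) yields
\[
\Prob(\sigma_k = 1 \mid \text{retentions}) = \tfrac{1}{2} + 2\Var_x\!\bigl(p_k(\widetilde Y_k)\bigr) + O(1/L).
\]
Berry--Esseen approximates $p_k$ by a Gaussian CDF with variance ratio $s_k/L$, and the arcsin identity for bivariate-Gaussian orthant probabilities gives $\Var_x(p_k(\widetilde Y_k)) = \tfrac{1}{2\pi}\arcsin(s_k/L) + o(1) \ge \tfrac{1}{2\pi}\arcsin((1-q)/4) =: c(q) > 0$ on $\mathcal{G}\cap\mathcal{G}'$. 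Choose $\kappa_0 < c(q)/3$, so that $\mathbb{E}[\sigma_k \mid \text{retentions}] \ge \tfrac{1}{2} + 2\kappa_0$ throughout $\mathcal{G}\cap\mathcal{G}'$.

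\emph{Concentration and conclusion.} On $\mathcal{G}$, the $x$-supports of $\sigma_k$ and $\sigma_{k+2}$ are separated by at least $L/(1-q) - O(m^\lambda + \sqrt{L}) > 0$ and hence disjoint, so conditional on the retentions the odd-indexed $\sigma_k$'s are jointly independent Bernoullis of mean $\ge \tfrac{1}{2}+2\kappa_0$, and similarly for the even-indexed. Applying Hoeffding to each parity class yields
\[
\Prob\bigl(\textstyle\sum_k \sigma_k \le (\tfrac{1}{2}+\kappa_0) m^\alpha \,\big|\, \mathcal{G}\cap\mathcal{G}'\bigr) \le 2e^{-c\kappa_0^2 m^\alpha} \le e^{-cm^{2\lambda-1}},
\]
using $\alpha > 2\lambda - 1$ (which follows since $\alpha > 1/2 - 1/100$ forces $1 - 2\alpha < 1/50$, while $2\lambda - 1 < 1 - 2\alpha$). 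Combining the failure bounds on $\mathcal{G}^c$ and $\mathcal{G}'^c$ with the above gives the claim. The main obstacle is the per-block bias step: one must show the Berry--Esseen error stays negligible relative to the $q$-dependent bias $\tfrac{1}{2\pi}\arcsin((1-q)/4)$, which degenerates as $q \to 1$; this forces the implicit constants (and the implicit lower bound on $m$) to depend on $q$.
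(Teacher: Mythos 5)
Your proposal is correct and follows essentially the same route as the paper's proof: control the drift of $g_U,g_V$ along the interval, guarantee that each pair of blocks shares a constant fraction of doubly-retained source bits, deduce a per-block majority-agreement probability exceeding $\tfrac{1}{2}+2\kappa_0$, and conclude by concentration over the $m^\alpha$ blocks. You are in fact more explicit than the paper in two spots it treats tersely, namely the arcsine/orthant computation quantifying the bias above $\tfrac{1}{2}$ (the paper only asserts a constant $c(p)>0$) and the odd/even parity split handling the weak dependence between adjacent blocks.
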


\begin{proof}
Let $A$ denote the event $|g_U(i+k)-g_V(j+k)| \le 3m^\lambda$ for all $0 \le k \le m-1$. We first show \begin{equation}\label{Alarge} \mathbb{P}(A) \ge 1-6m\exp(-c_1m^{2\lambda-1}).\end{equation} We have $$\left|g_U(i+k)-g_V(i+k)\right| \le \left|g_U(i+k)-g_U(i)-\frac{k}{p}\right|+\left|-g_V(i+k)+g_V(i)+\frac{k}{p}\right|+\left|g_U(i)-g_V(i)\right|.$$ Note $g_U(i+k)-g_U(i) - \frac{k}{p} \ge m^\lambda$ implies $\sum_{j=1}^{k/p+m^\lambda} r_{i+j} \le k$, which has probability at most $ \exp(-cm^{2\lambda}/k) \le \exp(-cm^{2\lambda-1})$. Similarly if $g_U(i+k)-g_U(i)-\frac{k}{p} \le -m^\lambda$. So, by a union bound, we've established \eqref{Alarge}. 

\vs

Let $\varphi(k) = km^{1-\al}$ for $0 \le k \le m^\al$. For $1 \le k \le m^\al$, let $$\ell_k = \max\left(g_U(i+\varphi(k-1)),g_V(j+\varphi(k-1))\right).$$ Let $B$ denote the event that $\sum_{\ell=\ell_k}^{\ell_k+\frac{m^{1-\al}}{p}-3m^\lam} r_\ell^U r_\ell^V \ge \frac{1}{2}pm^{1-\al}$ for all $1 \le k \le m^\al$. We claim that \begin{equation}\label{Blarge} \mathbb{P}(B | A) \ge 1-\exp(-c'm^{1-2\al}). \end{equation} Since $\lam < 1-\al$, the inequality \eqref{Blarge} follows immediately from Höeffding's inequality and a union bound. 

\vs

Now, fix $r_i^U$'s and $r_i^V$'s such that $A$ and $B$ hold. Then, by the definition of $\ell_k$ (and that $B$ holds), the strings $U[i+I_k]$ and $V[j+I_k]$ consist of at least $\frac{pm^{1-\al}}{2}$ identical bits and the remainder are independent. Hence, $$\mathbb{P}^X\left(\sigma_k = 1 | A,B\right) \ge c(p) > 0$$ for some constant $c(p)$ depending only on $p$. Therefore, fixing $r_i^U$'s and $R_i^V$'s so that $A$ and $B$ hold, the $\mathbb{P}^X$-probability that a match is declared between $U$ and $V$ is at least $1-\exp(-cm^\al)$, which is at least $1-\exp(-cm^{2\lam-1})$. The proof of Lemma \ref{truematch} is complete.  

\end{proof}

\begin{lemma}\label{falsematch}
Fix intervals $I,J\sub [n]$ with left endpoints denoted $i,j$, respectively. Let $x$ be a random string and $U,V$ random traces of $x$. Let $B_1$ denote the event that $g_U(i) > g_V(j)+m^{1-\beta}$. Then, $$\mathbb{P}(U[I], V[J] \text{ match } | B_1) \le 2\exp(-cm^\al).$$
\end{lemma}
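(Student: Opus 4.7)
The strategy is dual to Lemma \ref{truematch}: show that when $g_U(i)$ and $g_V(j)$ are well-separated, the source $x$-intervals feeding $U[I_k]$ and $V[I_k]$ are disjoint for every $k$, so the two block-majorities are independent near-fair coins, and then apply Hoeffding to $\sum_k \sigma_k$.

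First I would introduce a ``regular retentions'' event $A$ with two parts: (i) $\max_{0\le k\le m}|g_U(i+k) - g_U(i) - k/p| \le m^{(1+\alpha)/2}$ and the analogous bound for $V$; and (ii) each source-block length $g_U(i+km^{1-\alpha}) - g_U(i+(k-1)m^{1-\alpha})$ (and similarly for $V$) lies in $[m^{1-\alpha}/p - m^{1/2},\, m^{1-\alpha}/p + m^{1/2}]$ for $1 \le k \le m^\alpha$. The exponents $(1+\alpha)/2$ in (i) and $1/2$ in (ii) are calibrated so that Hoeffding/Chernoff (as in Lemma \ref{truematch}) plus a union bound give $\mathbb{P}(A^c) \le \exp(-c m^\alpha)$.

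On $A \cap B_1$, the constraint $\beta < (1-\alpha)/2$ rearranges to $\alpha + 2\beta < 1$, so $2m^{(1+\alpha)/2} < m^{1-\beta}$; combined with (i) and $B_1$ this gives $g_U(i+k) - g_V(j+k) > m^{1-\beta}/2$ for every $0 \le k \le m$. Since $\beta < \alpha$, this separation dominates the block size $m^{1-\alpha}/p$, so for each $k$ the source interval $J_k^U := (g_U(i+(k-1)m^{1-\alpha}), g_U(i+km^{1-\alpha})]$ lies strictly to the right of $J_k^V := (g_V(j+(k-1)m^{1-\alpha}), g_V(j+km^{1-\alpha})]$. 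Consequently, conditional on any retentions in $A$, $\Maj(U[I_k])$ and $\Maj(V[I_k])$ depend on disjoint subsets of the i.i.d.\ bits of $x$ and are therefore independent, each within $O(m^{-(1-\alpha)/2})$ of uniform on $\{0,1\}$; this gives $\mathbb{P}^X(\sigma_k = 1 \mid r^U, r^V) \le \tfrac{1}{2} + O(m^{-(1-\alpha)/2})$.

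The remaining step is to control dependencies between $\sigma_k$ and $\sigma_{k'}$: although $J_k^U \cap J_k^V = \emptyset$, a $U$-block $J_k^U$ may overlap a \emph{shifted} $V$-block $J_{k'}^V$ for $k'$ near $k + \Delta p m^{\alpha - 1}$, where $\Delta := g_U(i) - g_V(j)$. Here (ii) is decisive: each block has length within $m^{1/2}$ of $m^{1-\alpha}/p$, and since $\alpha < \tfrac{1}{2}$ the ratio $p m^{1/2}/m^{1-\alpha} = p m^{\alpha - 1/2}$ is $O(1)$, so on $A$ each index $k$ has at most $O(1)$ values of $k'$ with $J_k^U \cap J_{k'}^V \ne \emptyset$. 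The dependency graph of $(\sigma_k)_k$ thus has constant max degree; partitioning $\{1,\ldots,m^\alpha\}$ into $O(1)$ classes on which the $\sigma_k$'s are mutually independent, applying Hoeffding per class (of size $\Theta(m^\alpha)$), and union-bounding yields $\mathbb{P}^X\bigl(\sum_k \sigma_k > (\tfrac{1}{2}+\kappa_0) m^\alpha \mid r^U, r^V\bigr) \le \exp(-c\kappa_0^2 m^\alpha)$ on $A \cap B_1$. Combined with $\mathbb{P}(A^c) \le \exp(-c m^\alpha)$, this gives the lemma. The main obstacle is precisely this dependency bookkeeping: one must verify both that the gap between $J_k^U$ and $J_k^V$ is positive (needing $\alpha + 2\beta < 1$) and that the cross-overlap degree is $O(1)$ (needing $\alpha < \tfrac{1}{2}$ so the typical $\sqrt{m}$-scale block-length fluctuations translate to $O(1)$ in block-index units).
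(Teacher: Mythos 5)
Your proposal is correct in substance and follows the same overall strategy as the paper (use the separation $g_U(i)>g_V(j)+m^{1-\beta}$ to make the source blocks feeding $U[I_k]$ and $V[I_k]$ disjoint, so the block majorities are essentially independent fair coins, then apply a Chernoff-type bound to $\sum_k\sigma_k$), but the mechanism for handling dependence across blocks is genuinely different. The paper conditions on the single event that $g_U(i+l)\ge g_V(j+l+m^{1-\alpha})+cm^{1-\alpha}$ for all $l\le m-m^{1-\alpha}$, i.e.\ the $U$-source stays ahead of the $V$-source \emph{shifted by one whole block}; this has probability $1-2me^{-cm^{1-2\beta}}$ given $B_1$ (and $1-2\beta>\alpha$ since $\beta<\frac{1-\alpha}{2}$), and it yields a triangular independence structure ($U$-block $k$ is independent of all $V$-blocks $\le k$ and all $U$-blocks $<k$), so revealing blocks in the interleaved order $V_1,U_1,V_2,U_2,\dots$ makes each $\sigma_k$ a conditionally near-fair coin and a single Azuma/sequential-domination bound finishes. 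You instead control two-sided fluctuations of both $g_U$ and $g_V$ plus all block lengths, and then deal with the residual cross-overlaps (a $U$-block meeting a \emph{shifted} $V$-block) via a bounded-degree dependency graph and class-wise Hoeffding; your calibration of exponents ($\alpha+2\beta<1$ for the gap, $\alpha<\tfrac12$ so block-length fluctuations are sub-block-size and the overlap degree is $O(1)$) is right, and the mutual independence within a class is legitimate since the $\sigma_k$ in a class are functions of pairwise disjoint sets of i.i.d.\ bits of $x$, given the retentions. The one soft spot is your assertion that the $O(1)$ color classes have size $\Theta(m^\alpha)$: a greedy coloring of a bounded-degree graph need not produce balanced classes, and a class of size $O(1)$ would make the per-class Hoeffding bound (and the averaging step that some class must exceed its proportional share) useless. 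This is fixable — use an equitable coloring (Hajnal--Szemer\'edi) with $d+1$ classes of nearly equal size, or exploit the near-translation structure of the overlaps, or simply switch to the paper's one-sided shift-by-a-block event, which eliminates the cross-overlaps and the coloring altogether — so it is a bookkeeping gap rather than a flaw in the approach; the paper's route buys a shorter concentration step, while yours avoids the implicit martingale argument at the cost of this extra combinatorial care.
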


\begin{proof}
Let $B$ denote the event that $g_U(i+l) \ge g_V(j+l+m^{1-\al})+cm^{1-\al}$ for all $l \le m-m^{1-\al}$. Then $\mathbb{P}(B | B_1) \ge 1-2me^{-cm^{1-2\beta}}$. Given $B$, the bits in $U[i+I_k]$ are independent of $V[j+I_k]$ for $l \le k$ and $U[i+I_l]$ for $l < k$. So $\mathbb{P}(U[I] \text{ matches } V[J] \hs\hs | \hs\hs B) \le \exp(-cm^\al)$.  
\end{proof}

\vs

\section{Alignment Algorithm}

Let $K_1 = (1/\ep)^{100}$. We will define an associated $K_2$, which will satisfy $\sqrt{K_1} \le K_2 \le K_1$. The edit distance of our approximation will be at most $n/K_2^{0.1}$, which is at most $\ep n$.

\vs

Let $U^{(1)},\dots,U^{(T)}$ be the $T$ traces. Fix $j \in [\ep^2n,(\frac{1}{2}-\ep^2)n]$. The following algorithm will all be with respect to $j$ (and the $U^{(t)}$'s). 

\vs

Let $L_1 = \sqrt{n}$ and $L_{r+1} = \sqrt{L_r}$ for $r \ge 1$. For $r \ge 1$, let $I_r^{(1)} = [j-2L_r,j-L_r]$. For each $t \in \{2,\dots,T\}$, let $I_1^{(t)}$ be the leftmost interval of length $L_1$ that matches $I_1^{(1)}$, with $I_1^{(t)} := \emptyset$ if no such interval exists. If $I_1^{(t)} = \emptyset$, let $\wt{I}_1^{(t)} = \emptyset$. Otherwise, writing $I_1^{(t)} = [a,b]$, define $\wt{I}_1^{(t)} = [b,b+2L_1]$. Inductively, for $r \ge 1$ and $t \in [2,T]$, let $I_{r+1}^{(t)}$ denote the first interval of length $L_{r+1}$ in $\wt{I}_r^{(t)}$ that matches $I_{r+1}^{(1)}$ (let $I_{r+1}^{(t)} := \emptyset$ if no such interval exists). If $I_{r+1}^{(t)} = \emptyset$, let $\wt{I}_{r+1}^{(t)} = \emptyset$, and otherwise, if $I_{r+1}^{(t)} = [a,b]$, let $\wt{I}_{r+1}^{(t)} = [b,b+2L_{r+1}]$. With our chosen $K_1$, we stop at $R := \min\{r : L_r \le K_1\}$ and define $K_2 := L_R$. That is, for each $t \ge 2$, the last intervals defined are $I_R^{(t)}$ and $\wt{I}_R^{(t)}$, obtained by matching an interval of length $K_2$ in $\wt{I}_{R-1}^{(t)}$ with $I_R^{(1)}$. 

\vs

Given an interval $J$ and a trace $U$, we say the block $U[J]$ is a \textit{trace-anchor} if it has length\footnote{We needed to insist the length was odd so that the ``middle" is well-defined. But we shall merely refer to the length as $(4+\frac{1}{2}c_0p)C_9\log K_2$ in what follows. Similarly with other similar definitions.} $2\lfloor (2+\frac{1}{4}c_0p) C_9\log K_2\rfloor+1 \approx (4+\frac{1}{2}c_0p)C_9\log K_2$ and contains exactly one $1$, and this $1$ is located exactly in the middle. Here, $c_0 \in (0,1/100)$. We say a block in a trace is a \textit{trace-pseudo-anchor} if it has length $C_9 \log K_2$ and contains at most $1.5 C_8\log K_2$ ones. See the table on page $8$ for a consolidation of many of the definitions.

\vs

For a $t \in \{2,\dots,T\}$, we say that $t$ is \textit{$j$-good} if $U^{(t)}\left[\wt{I}_R^{(t)}\right]$ contains exactly one trace-anchor, say $U^{(t)}[J]$, and no trace-pseudo-anchor $U^{(t)}[J']$ with $J'$ disjoint from $J$. Furthermore, we say that $t$ is \textit{$j$-spurious} if $U^{(t)}\left[\wt{I}_R^{(t)}\right]$ contains a block of length at least $C_9\log K_2$ with at least $2$ ones and at most $\frac{1}{2}\log K_2$ ones.

\vs

We say that an index $j$ of $U^{(1)}$ is \textit{trace-useful} if $\#\{t \le T : t \text{ is } j-\text{good}\} \ge \frac{1}{2}pT$ and $\#\{t \le T : t \text{ is } j-\text{spurious}\} \le \frac{1}{8}p^2T$. 

\vs

If $j$ is trace-useful, then for each $j$-good number $t$, delete all bits in $U^{(t)}$ before and including the middle of the trace-anchor to obtain a substring $U_*^{(t)}$ (that starts with at least $\frac{1}{2}(4+\frac{1}{2}c_0p)C_9\log K_2$ $0$'s and was preceded by a $1$). Then apply Lemma \ref{hppzgeneralization} to the traces $U_*^{(t)}$ for $t$ which is $j$-good.

\vs

\section{Analysis of Alignment Algorithm}

We start by arguing that with extremely high probability, a match was found at each step of the iteration and that each of the blocks $U^{(t)}[\wt{I}_R^{(t)}]$ come from basically the same place as $U^{(1)}[j-L_R,j+L_R]$. 

\begin{lemma}\label{success}
With probability at least $1-\exp(-cK_2^{2\lam-1})$, the following two both hold. For all $2 \le t \le T$, we have $\wt{I}_R^{(t)} \not = \emptyset$. For all $2 \le t \le T$ and all $0 \le k \le 2L_R-1$, we have $$\left|g_{U^{(t)}}[\ell_t+k]-g_{U^{(1)}}[\ell_1+k]\right| \le K_2^{1-\beta/2},$$ where $\ell_t$ is the left endpoint of $\wt{I}_R^{(t)}$.
\end{lemma}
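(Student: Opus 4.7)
The strategy is to induct on the alignment level $r \in \{1, \ldots, R\}$. Writing $\ell_t^{(r)}$ for the left endpoint of $\wt{I}_r^{(t)}$ and setting $\ell_1^{(r)} = j - L_r$ as the natural analog on the reference trace, the inductive invariant I want to establish is: for every $t \in \{2, \ldots, T\}$, the interval $I_r^{(t)}$ is nonempty, and the $x$-drift at the endpoints satisfies
\[
\bigl| g_{U^{(t)}}(\ell_t^{(r)}) - g_{U^{(1)}}(\ell_1^{(r)}) \bigr| \le L_r^{1-\beta}.
\]
The conclusion of the lemma will then follow from this invariant at $r = R$ together with a random-walk extension to all offsets $k \in [0, 2L_R - 1]$.

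For the inductive step from $r$ to $r+1$, the algorithm searches the window $\wt{I}_r^{(t)}$ of length $2L_r = 2L_{r+1}^2$ for a length-$L_{r+1}$ match to $I_{r+1}^{(1)}$. I would first exhibit a good offset $k^\star \in [0, 2L_r - L_{r+1}]$ with
\[
\bigl| g_{U^{(t)}}(\ell_t^{(r)} + k^\star) - g_{U^{(1)}}(j - 2L_{r+1}) \bigr| \le L_{r+1}^\lam,
\]
so that Lemma~\ref{truematch} with $m = L_{r+1}$ produces a true match at $k^\star$ with probability at least $1 - \exp(-cL_{r+1}^{2\lam - 1})$. Existence of $k^\star$ rests on two ingredients: with failure probability $\le (1-p)^{L_{r+1}^\lam}$, the target $x$-position has a retained neighbor in $U^{(t)}$ within $x$-distance $L_{r+1}^\lam$; and the inductive endpoint bound together with the standard Hoeffding estimate $|D_s(2L_r) - 2L_r/p| \le O(\sqrt{L_r})$ for $D_s(k) := g_{U^{(s)}}(\ell_s^{(r)}+k) - g_{U^{(s)}}(\ell_s^{(r)})$ places $g_{U^{(1)}}(j - 2L_{r+1})$ strictly inside the $x$-range of $\wt{I}_r^{(t)}$. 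To rule out a spurious earlier match, I apply Lemma~\ref{falsematch} at every offset whose $x$-position differs from the target by more than $L_{r+1}^{1-\beta}$ and union-bound over the at most $2L_{r+1}^2$ candidates, giving failure probability $\le 2L_{r+1}^2 \exp(-cL_{r+1}^\al)$. The leftmost match $I_{r+1}^{(t)}$ thereby has endpoint $x$-drift $\le L_{r+1}^{1-\beta}$, establishing the invariant at level $r+1$. The base case $r = 1$ is the same except the search spans all of $U^{(t)}$, so the false-match union bound is over $\le n$ offsets, still negligible because $L_1^\al = n^{\al/2}$ is polynomial in $n$.

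To convert the level-$R$ endpoint bound into the uniform conclusion, I decompose
\[
g_{U^{(t)}}(\ell_t + k) - g_{U^{(1)}}(\ell_1 + k) = \bigl( g_{U^{(t)}}(\ell_t) - g_{U^{(1)}}(\ell_1) \bigr) + \bigl( D_t(k) - D_1(k) \bigr).
\]
Since $D_t$ and $D_1$ are determined by the disjoint retention variables of the two traces, each is a sum of i.i.d.\ geometric-like increments of mean $1/p$, and a truncated Hoeffding estimate gives $|D_s(k) - k/p| \le O(K_2^{1-\beta/2})$ uniformly in $0 \le k \le 2K_2$ with probability $\ge 1 - \exp(-cK_2^{1-\beta})$. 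Combining with the level-$R$ endpoint drift $K_2^{1-\beta}$ and absorbing $O(1)$ constants into the exponent (possible because $\beta > 0$), we obtain the claimed uniform bound $K_2^{1-\beta/2}$.

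The main obstacle I anticipate is the careful verification that the good offset $k^\star$ exists at every inductive step: this requires simultaneously controlling the inductive endpoint drift, the $x$-length of the search window, and the presence of a retained neighbor near the target in $U^{(t)}$'s retention pattern. The rest is bookkeeping — summing per-level failures over $r \in \{1, \ldots, R\}$ and the $T - 1$ nontrivial traces. Since $R = O(\log \log n)$ and $T$ is constant, and since the parameter constraints $\lam < 1 - \al$ and $\beta < (1 - \al)/2$ imply $2\lam - 1 < \min(\al, 1 - \beta, \lam)$, the dominant term is the level-$R$ true-match failure $\exp(-cK_2^{2\lam - 1})$, matching the stated bound.
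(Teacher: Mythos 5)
Your proposal is correct and follows essentially the same route as the paper: an induction on the alignment levels with an ``on track'' invariant (nonempty interval plus endpoint drift at most $L_r^{1-\beta}$), using Lemma \ref{truematch} to guarantee a true match and Lemma \ref{falsematch} with a union bound over the candidate offsets to exclude far-off matches, so that the dominant failure probability is the level-$R$ term $\exp(-cK_2^{2\lam-1})$. The paper's own proof is terser, stopping at the on-track statement for $I_R^{(t)}$; your extra steps (existence of the good offset $k^\star$ and the Hoeffding extension from the endpoint drift $K_2^{1-\beta}$ to the uniform bound $K_2^{1-\beta/2}$ over $0 \le k \le 2L_R-1$) simply make explicit what the paper leaves implicit.
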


\begin{proof}
For just this proof, for $t \ge 2$ and $r \ge 1$, we say that $I_r^{(t)}$ is \textit{on track} if $I_r^{(t)} \not = \emptyset$ and $|g_{U^{(t)}}(\ell_{r,t})-g_{U^{(1)}}(\ell_{r,1})| \le L_r^{1-\beta}$, where $\ell_{r,t}$ is the left endpoint of $I_r^{(t)}$. For each $r,t \ge 2$, we have, by Lemmas \ref{truematch} and \ref{falsematch}, that $$\mathbb{P}(I_r^{(t)} \text{ is not on track} \hs | \hs I_{r-1}^{(t)} \text{ is on track}) \le \exp(-cL_r^{2\lam-1})+2L_{r-1}\exp(-cL_r^\al).$$ By our choice of the $L_r$'s and the fact that $\al > 2\lam-1$, we have $$\exp(-cL_r^{2\lam-1})+2L_{r-1}\exp(-cL_r^\al) \le \exp(-c'L_r^{2\lam-1}).$$ Thus, $$\mathbb{P}(\exists t \le T : I_R^{(t)} \text{ is not on track}) \le T\exp(-c'K_2^{2\lam-1}) \le \exp(-c''K_2^{2\lam-1}).$$
\end{proof}

\vs

Say a block in the unknown string $x$ is an \textit{anchor} if it is of length $4\lfloor p^{-1}C_9\log K_2\rfloor +1$ and contains exactly one $1$, and this $1$ is located exactly in the middle. Say a block in $x$ is a \textit{pseudo-anchor} if it is of length $2p^{-1}C_9\log K_2$ and has at most $p^{-1}\log K_2$ $1$'s. Here, $C_9 \gg C_8 \gg 1$. 

\vs

Say an index $\ell$ (in the unknown string) is \textit{useful} if in $x[\ell-K_2,\ell+K_2]$ there is exactly one anchor $x[I]$ and no pseudo-anchor $x[I']$ with $I'$ disjoint from $I$. 

\vs

Since there is a lot of terminology, we put the key terms into a table, shown below.

\begin{table}[h]
\begin{tabular}{|l|l|l|}
                    & Length                           & Number of 1s               \\
Trace anchor        & $(4+\frac{1}{2}c_0p)C_9\log K_2$ & $= 1$                      \\
Trace pseudo-anchor & $C_9\log K_2$                   & $\le 1.5C_8\log K_2$             \\
Anchor              & $4p^{-1}C_9\log K_2$             & $= 1$                      \\
Pseudo-anchor       & $2p^{-1}C_9\log K_2$              & $\le p^{-1}\log K_2$
\end{tabular}
\end{table}

\vs

The parameters are chosen so that (part of) a trace anchor is very likely to have come from an anchor, and (part of) a pseudo-anchor is very likely to have come from a trace pseudo-anchor. 

\vs

\begin{table}[h]
\begin{tabular}{l|l}
$t$ $j$-good     & $U^{(t)}[\wt{I}_R^{(t)}]$ has 1 trace anchor and no disjoint trace pseudo-anchor                     \\
$t$ $j$-spurious & $U^{(t)}[\wt{I}_R^{(t)}]$ has block of length $C_9\log K_2$ with $\#1's \in [2,\frac{1}{2}\log K_2]$ \\
$j$ trace-useful & $\#\{t \text{ : } j-\text{good}\} \ge pT/2$ and $\#\{t \text{ : } j-\text{spurious}\} \le p^2T/8$.       \\
$l$ useful       & $x[\ell-K_2,\ell+K_2]$ has 1 anchor and no disjoint pseudo-anchor                                   
\end{tabular}
\end{table}

\vs

By the above paragraph, the parameters are chosen so that a trace-useful index is very likely to have come from a useful index. We will be using trace-useful indices to reconstruct part of the unknown string. We can ensure trace-useful indices are somewhat common by looking at the image of a stronger version of a useful index under the deletion channel; this is the content of Lemma \ref{useful&trace-useful}. 

\vs

Write $g = g_{U^{(1)}}$ (so, to recall, $g(j')$ is the location in $x$ which goes to bit $j'$ in $U^{(1)}$ under the deletion channel). Call an index $g(j)$ \textit{super-useful} if there is an anchor of length $(4p^{-1}+c_0)C_9\log K_2$ centered at $g(j)$ with no block in $[g(j)-K_2,g(j)+K_2]$ that has length $2p^{-1}C_9\log K_2$, has at most $2p^{-1}C_8\log K_2$ ones, and is disjoint from the anchor centered at $g(j)$. The point is that, $g(j)$ being super-useful is enough to imply, with high probability, that $j$ is trace-useful (while $g(j)$ being useful is not). 

\vs

\begin{lemma}\label{useful&trace-useful}
It holds that $$\mathbb{P}[g(j) \text{ is useful and } j \text{ is trace-useful}] \ge \frac{1}{4}2^{-(4p^{-1}+c_0)C_9\log K_2}.$$
\end{lemma}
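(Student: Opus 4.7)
The plan is to use the intermediate notion of $g(j)$ being \emph{super-useful} (defined immediately above the lemma) as a sufficient condition that implies both $g(j)$ being useful and $j$ being trace-useful with good probability. Writing $L_l := (4p^{-1}+c_0)C_9\log K_2$ and $L_p := 2p^{-1}C_9\log K_2$, the event that the length-$L_l$ block centered at $g(j)$ matches the specific anchor pattern has probability $2^{-L_l}$; conditional on this pattern, the bits in $[g(j)-K_2,g(j)+K_2]$ outside the block are i.i.d.\ uniform, and for any fixed starting position a Hoeffding bound gives that a length-$L_p$ block has at most $2p^{-1}C_8\log K_2$ ones with probability at most $K_2^{-c(C_9-2C_8)/p}$. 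Choosing $C_9 \gg C_8 \gg 1$ so that this exponent exceeds $2$, a union bound over the $O(K_2)$ starting positions makes the total ``some bad length-$L_p$ block'' probability at most $1/2$; hence $\mathbb{P}[g(j) \text{ super-useful}] \ge \frac{1}{2}\cdot 2^{-L_l}$. Moreover, super-useful implies useful: the long anchor contains a standard anchor of length $L_s \approx 4p^{-1}C_9\log K_2$ at its center; a competing anchor of length $L_s$ elsewhere in $[g(j)-K_2,g(j)+K_2]$ would, by a short geometric argument (it cannot contain $g(j)$ and cannot fit in a half of the long anchor), contain a length-$L_p$ subblock disjoint from the long anchor with $\le 1$ one (excluded by super-useful, since $1 < 2p^{-1}C_8\log K_2$), while a pseudo-anchor disjoint from the short anchor is either disjoint from the long anchor (directly excluded) or overlaps only the thin extension of width $\frac{c_0}{2}C_9\log K_2$, a case absorbed into the Hoeffding union bound by enlarging the set of considered block positions slightly.

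For the conditional step, I would fix the retention variables of $U^{(1)}$ (which determine $g$) and condition on super-useful. The retentions of $U^{(2)},\ldots,U^{(T)}$ are then independent of this conditioning. By Lemma \ref{success}, each $\wt{I}_R^{(t)}$ covers, in $x$-coordinates, a window aligned to the neighborhood of $g(j)$ within error $K_2^{1-\beta/2} \ll K_2$, so the trace of the long anchor lies inside $U^{(t)}[\wt{I}_R^{(t)}]$ whenever its central $1$ survives. The central $1$ survives with probability $p$, and conditional on survival, the counts of retained $0$'s on each side are binomials whose distributions place at least a constant mass on the exact range required to make the block a trace-anchor (of length $(4+\frac{c_0 p}{2})C_9\log K_2$); hence $\mathbb{P}(t \text{ is } j\text{-good}) \ge cp$ for a constant $c>0$ independent of $t$. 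Similarly, since super-useful forces all length-$L_p$ blocks near $g(j)$ (disjoint from the long anchor) to have many ones, a Chernoff estimate (using that a trace-pseudo-anchor of length $C_9\log K_2$ with at most $1.5 C_8\log K_2$ ones has, in expectation, an $x$-preimage of length $\approx p^{-1}C_9\log K_2$ with a comparable number of ones) shows that the trace contains no trace-pseudo-anchor disjoint from the constructed trace-anchor, except with probability small in $K_2$; the analogous estimate gives $\mathbb{P}(t \text{ is } j\text{-spurious}) \le p^2/16$. Independence of the $T-1$ traces and Chernoff concentration then yield $\mathbb{P}(j \text{ trace-useful} \mid g(j) \text{ super-useful}) \ge \frac{1}{2}$ for $T$ large enough.

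Combining the two steps, $\mathbb{P}[g(j) \text{ useful and } j \text{ trace-useful}] \ge \mathbb{P}[\text{super-useful}] \cdot \mathbb{P}[\text{trace-useful} \mid \text{super-useful}] \ge \frac{1}{2}\cdot 2^{-L_l}\cdot \frac{1}{2} = \frac{1}{4}\cdot 2^{-(4p^{-1}+c_0)C_9\log K_2}$. The main obstacle is the conditional step: one must carefully track where the trace of the long anchor lands inside the algorithmically determined window $\wt{I}_R^{(t)}$ using Lemma \ref{success}, verify that it forms a genuine trace-anchor (not merely that the central $1$ survives) with constant probability under the deletion channel, and use the super-useful control on low-density $x$-blocks to rule out competing trace-(pseudo-)anchors elsewhere in $\wt{I}_R^{(t)}$ that would spoil $j$-goodness or cause $j$-spuriousness.
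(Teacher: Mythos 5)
Your overall route is the same as the paper's: lower-bound the probability of the super-useful event by $\tfrac12\, 2^{-(4p^{-1}+c_0)C_9\log K_2}$, exploit that the alignment of Lemma \ref{success} was performed to the left of $g(j)-K_2$ and is therefore essentially independent of it, and then argue that, conditionally, each trace is $j$-good essentially whenever the anchored $1$ survives, finishing with Chernoff over the $T$ traces. The one step where your argument, as written, does not go through is the per-trace estimate. You claim only that, given survival of the central $1$, the counts of retained zeros on each side place ``at least a constant mass on the exact range required,'' i.e.\ $\mathbb{P}(t \text{ is } j\text{-good}) \ge cp$ for an unspecified constant $c>0$. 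But trace-usefulness requires at least $\tfrac12 pT$ of the traces to be $j$-good, and that threshold is fixed by the definition; if $c<\tfrac12$, Chernoff concentration gives the opposite conclusion (with probability $1-\exp(-cT)$ fewer than $\tfrac12 pT$ traces are good), so the inference ``$\mathbb{P}(j\text{ trace-useful}\mid g(j)\text{ super-useful})\ge \tfrac12$ by Chernoff'' is a non sequitur. You need the per-trace conditional probability to be at least $(\tfrac12+\delta)p$ for a fixed margin $\delta$.

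The missing observation is that the definitions carry exactly the slack needed to make this probability $(1-o(1))p$ rather than merely $cp$: the super-useful anchor in $x$ has half-length $(2p^{-1}+\tfrac{c_0}{2})C_9\log K_2$, so the number of retained zeros on each side of the surviving $1$ is Binomial with mean $(2+\tfrac{c_0p}{2})C_9\log K_2$, whereas a trace-anchor only requires about $(2+\tfrac{c_0p}{4})C_9\log K_2$ retained zeros per side (a longer surviving run still contains a trace-anchor, so only a one-sided lower bound is needed). The shortfall $\tfrac{c_0p}{4}C_9\log K_2$ is a constant fraction of the mean, so each side fails with probability $K_2^{-c}$; combined with the super-useful exclusion of competing low-density blocks, this gives $\mathbb{P}(t\text{ is }j\text{-good}\mid\cdot)\ge \tfrac34 p$, which is exactly the paper's ``conditional probability at least $3/4$ given survival,'' and then your Chernoff step over the $T$ traces does clear the $\tfrac12 pT$ threshold. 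With that correction (and with your treatment of the straddling pseudo-anchor case made explicit, which is at roughly the same level of detail as the paper itself), the rest of your argument coincides with the paper's proof and yields the stated bound $\tfrac14\, 2^{-(4p^{-1}+c_0)C_9\log K_2}$.
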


\begin{proof}
Note that $$\mathbb{P}[\text{an anchor of length } (4p^{-1}+c_0)C_9\log K_2 \text{ is centered at } g(j)] = 2^{-(4p^{-1}+c_0)C_9\log K_2}.$$ Since $C_9 \gg C_8$, with probability at most $1/20$, the block $$[g(j)-K_2,g(j)+K_2]\setminus[g(j)-2p^{-1}C_9\log K_2, g(j)+2p^{-1}C_9\log K_2]$$ contains a block of length $2p^{-1}C_9\log K_2$ with at most $2p^{-1}\log K_2$ ones. Therefore, $$\mathbb{P}[g(j) \text{ is super-useful}] \ge \frac{1}{2}2^{-(4p^{-1}+c_0)C_9\log K_2}.$$ Note that all of the alignment was done to the left of $g(j)-K_2$, so having $|g_{U^{(t)}}[\ell_t+k]-g_{U^{(1)}}[\ell_1+k]| \le K_2^{1-\beta/2}$ for each $0 \le k \le 2K_2-1$, where $\ell_t$ is the left endpoint of $\wt{I}_R^{(t)}$, is independent of whether there is a super-useful index at $g(j)$, so by Lemma \ref{success}, the probability of both is at least $\frac{1}{3}2^{-(4p^{-1}+c_0)C_9\log K_2}$. 

\vs

Given that $g(j)$ is super-useful and the anchored $1$ survived the deletion channel in the trace $U^{(t)}$, we claim that, with conditional probability at least $3/4$, the string $U^{(t)}[I_R^{(t)}]$ contains exactly one trace-anchor with no trace-pseudo-anchor disjoint from it. To see this, first note that there was an anchor of length $(4p^{-1}+c_0)C_9\log K_2$ centered at $g(j)$, and the length of a trace-anchor is $(4+\frac{1}{2}c_0p)C_9\log K_2$. Also, note that if there were a trace-pseudo-anchor disjoint from the trace-anchor, then there is a block of length $C_9\log K_2$ with at most $1.5C_8\log K_2$ ones in $U^{(t)}[I_R^{(t)}]$, which has a very high probability of coming from a block of length at least $2p^{-1}C_9\log K_2$ with at most $2p^{-1}1.5 C_8\log K_2$ ones, but this doesn't exist in $x$ restricted to $g_{U^{(t)}}[I_R^{(t)}]$.

\vs

Since $C_9 \gg 1$, it is also very unlikely that $[g(j)-K_2,g(j)+K_2]$ contains block of length at least $\frac{1}{2}p^{-1}C_9\log K_2$ with at least $2$ ones and at most $p^{-1}\log K_2$ ones, so we may condition on this event as well. Similar reasoning to the previous paragraph shows that, with probability at least $1-\exp(-cT)$, there will then be fewer than $p^2T/8$ spurious numbers $t$.
\end{proof}

\vs

Next we show that given $j$ is trace-useful, with high probability $g(j)$ is useful.

\vspace{1.5mm}

\begin{lemma}\label{notuseful&trace-useful}
We have $$\mathbb{P}[g(j) \text{ not useful and } j \text{ is trace useful}] \le \exp(-cT).$$
\end{lemma}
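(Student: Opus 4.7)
The plan is to condition on $x$ and $r^{(1)}$, so that the events $\{t \text{ is } j\text{-good}\}_{t \ge 2}$ are conditionally iid Bernoulli$(q)$ with $q = q(x, r^{(1)})$, and to apply a Chernoff bound after showing $q \le K_2^{-c}$ (which is well below $p/4$ for our parameters). Then Chernoff gives $\mathbb{P}[\#\{t:j\text{-good}\} \ge pT/2 \mid x, r^{(1)}] \le \exp(-\Omega(pT)) \le \exp(-cT)$. We pay $\exp(-cK_2^{2\lambda - 1}) \le \exp(-cT)$ (valid since $K_2^{2\lambda-1}$ is polynomial in $1/\epsilon$ while $T$ is sub-polynomial) to condition on the alignment event of Lemma~\ref{success}; this ensures the $x$-preimage of $\wt{I}_R^{(t)}$ tracks a slight enlargement of $W := x[g(j) - K_2, g(j) + K_2]$ within additive error $K_2^{1-\beta/2}$.

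We case-split on how $g(j)$ fails to be useful. In \textbf{Case (b)}, $W$ contains two or more anchors $A_1, A_2$: each anchor's central $1$ survives independently with probability $p$, and in each of the four survival patterns $U^{(t)}[\wt{I}_R^{(t)}]$ contains either two disjoint trace-anchors, a single trace-anchor accompanied by a trace-pseudo-anchor formed (via Chernoff on the $4p^{-1}C_9\log K_2$-long run of zeros in the deleted anchor) by a long trace-run of zeros, or no trace-anchor. In \textbf{Case (c)}, $W$ has the unique anchor together with a disjoint pseudo-anchor $P$: Chernoff on the retentions of both bits and ones of $P$ shows that its trace image has length concentrated at $2C_9\log K_2$ with at most $\approx \log K_2 \le 1.5 C_8 \log K_2$ ones, producing a trace-pseudo-anchor disjoint from any trace-anchor. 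In both cases $t$ fails to be $j$-good except on a Chernoff-exceptional event of probability $K_2^{-c}$, so $q \le K_2^{-c}$.

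\textbf{Case (a)}, no anchor in $W$, is the main obstacle. A trace-anchor in $\wt{I}_R^{(t)}$ requires the precise pattern $0^{L/2}10^{L/2}$ at some trace position $i$, with $L = (4 + \tfrac{1}{2}c_0 p)C_9\log K_2$; correspondingly, the $x$-source of length $\approx L/p$ at the putative anchor must yield exactly one surviving $1$ landing in the middle trace position. The no-anchor assumption forces each candidate source block of length $\approx L/p$ centered at a $1$ in $W$ to have at least two ones; the probability that exactly one of $k \ge 2$ ones survives is $\le k p(1-p)^{k-1} \le 2p(1-p)$, and multiplying by a local-CLT factor for the length-matching gives a per-position bound of $K_2^{-cC_9}$. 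A union bound over $\le 2K_2$ trace positions yields $q \le K_2^{1-cC_9}$. The delicate subcase is when $W$ contains a sparse-region (pseudo-anchor) structure that could boost $q$; here one shows instead that such structure translates (via Chernoff on retentions) into a trace-pseudo-anchor block with ones in the $j$-spurious range $[2, \tfrac12 \log K_2]$, so either $q \le K_2^{-c}$ or else the $j$-spurious count exceeds $p^2T/8$ by a further Chernoff bound, in both cases violating trace-useful.

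Combining, $q \le K_2^{-c}$ conditional on $g(j)$ not useful (outside a set of exceptional $(x, r^{(1)})$ already absorbed into the $\exp(-cT)$ error), and Chernoff delivers the claimed bound. The main obstacle is Case (a); Cases (b) and (c) follow directly from the deterministic anchor/pseudo-anchor correspondences under deletion, while (a) requires the interplay between the scarcity of trace-anchors (when sparse $x$-regions are absent) and the abundance of $j$-spurious traces (when sparse $x$-regions are present).
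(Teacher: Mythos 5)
Your overall architecture (condition on $x$ and the first trace, bound the per-trace probability of being $j$-good or else force many $j$-spurious traces, then apply Chernoff across the conditionally i.i.d.\ traces) matches the paper's, and your Cases (b) and (c) are fine in spirit. The gap is in Case (a), which is the heart of the lemma, and in how you combine the cases. First, the main-line estimate does not work as written: the bound $kp(1-p)^{k-1}\le 2p(1-p)$ is false when the deletion probability exceeds $2/3$, and, more importantly, $2p(1-p)$ is a constant, so multiplying it by a local-CLT factor of order $1/\sqrt{\log K_2}$ cannot yield a per-position bound $K_2^{-cC_9}$; after the union bound over $2K_2$ positions the stated estimate exceeds $1$. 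Genuine $K_2^{-c}$ decay requires the factor $q^{k-1}$ coming from source blocks with \emph{many} ones, which is exactly what your inequality discards. Second, and decisively, the closing claim that $q\le K_2^{-c}$ holds ``outside a set of exceptional $(x,r^{(1)})$ already absorbed into the $\exp(-cT)$ error'' cannot be right: a window with no anchor but containing, say, a run of $(4p^{-1}+c_0)C_9\log K_2$ zeros with exactly two adjacent ones near its middle has probability about $K_2^{-O(C_9)}$, which is vastly larger than $\exp(-cT)$ (recall $T=\exp(C\sqrt[3]{\log K_2})\gg\log K_2$), and conditioned on such an $x$ the per-trace probability of being $j$-good is a constant of order $pq$. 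Neither horn of your synthesis covers this situation.

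The mechanism that does handle it --- and this is what the paper's comparison of $\binom{l}{2}p^2q^{l-2}$ with $lpq^{l-1}$ encodes, and what the asymmetric thresholds $pT/2$ and $p^2T/8$ in the definition of trace-useful are designed for --- is a per-trace comparison valid for \emph{every} no-anchor $x$ simultaneously: if a trace-anchor arises, its source block must (up to a $K_2^{-c}$ concentration error) contain $l\ge 2$ ones of which all but one were deleted, and switching ``exactly one survives'' to ``exactly two survive'' costs only a factor $(l-1)p/(2q)\ge p/2$ while producing a trace block of length at least $C_9\log K_2$ with a number of ones in $[2,\tfrac12\log K_2]$, i.e.\ a $j$-spurious trace. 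Hence for every such $x$, either $\mathbb{P}[t\text{ is }j\text{-good}\mid x,U^{(1)}]\le p/3$, and Chernoff kills $\#\{t: j\text{-good}\}\ge pT/2$, or $\mathbb{P}[t\text{ is }j\text{-spurious}\mid x,U^{(1)}]\ge p^2/6>p^2/8$, and Chernoff kills $\#\{t: j\text{-spurious}\}\le p^2T/8$; both horns give $\exp(-cT)$. Your final sentence names this interplay, but the written proof never establishes the comparison, and the synthesis replaces it with the unavailable bound $q\le K_2^{-c}$, so as it stands the argument has a genuine gap precisely where the lemma is nontrivial.
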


\begin{proof}
We start by splitting into cases based on the definitions: $$\mathbb{P}[g(j) \text{ not useful and } j \text{ is trace useful}] \le $$ $$\mathbb{P}[\text{no anchor in } x[g(j)-K_2,g(j)+K_2] \text{ and } j \text{ trace-useful}] +$$ $$\mathbb{P}[\exists \; \text{pseudo-anchor in } x[g(j)-K_2,g(j)+K_2] \text{ and } j \text{ trace-useful}].$$ We may easily bound the second term by $$\mathbb{P}[\exists \; \text{pseudo-anchor in } x[g(j)-K_2,g(j)+K_2] \text{ and } j \text{ trace-useful}] \le \exp(-cT).$$ And since $${l \choose 2} p^2 q^{l-1} \ge \frac{p^2}{6}$$ for all $l \ge 2$ and $p+q = 1$, we may bound the first term by $$\mathbb{P}[\text{no anchor in } x[g(j)-K_2,g(j)+K_2] \text{ and } j \text{ trace-useful}] \le \exp(-cT).$$ 
\end{proof}

\begin{remark*}
Since we are choosing $T = \exp(C\sqrt[3]{\log K_2})$, the bound $\exp(-cT)$ is less than the lower bound of $K_2^{-(4+pc_0)p^{-1}C_9}$ we obtained for $\mathbb{P}[g(j) \text{ useful and } j \text{ trace-useful}]$ (in Lemma \ref{useful&trace-useful}). 
\end{remark*}

\vspace{1.5mm}

\begin{definition}
Fix $t \le T$ that is $j$-good. If $g(j)$ is useful and $j$ is trace-useful, let $\Gamma,\gamma_t$ denote the indices of the $1$ in the anchor and trace-anchor, respectively. 
\end{definition}

\begin{lemma}\label{correspond}
For any fixed $t \in [2,T]$, it holds that $$\mathbb{P}\left[g_{U^{(t)}}(\gamma_t) \not = \Gamma \hs \hs | \hs\hs  g(j) \text{ useful, } j \text{ trace-useful, and } t \text{ $j$-good} \right] \le \frac{1}{K_2^{0.5}}.$$  
\end{lemma}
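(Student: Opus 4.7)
My plan is to perform a union bound over all alternative source positions $\ell \in g_{U^{(t)}}[\wt{I}_R^{(t)}] \setminus \{\Gamma\}$ for the central $1$ of the trace-anchor in $U^{(t)}$, and show that the total contribution is much smaller than $K_2^{-0.5}$.

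The crucial probabilistic input is the approximate uniformity of the trace: since $x$ is uniform, each surviving bit of $U^{(t)}$ is individually uniform on $\{0,1\}$ and the bits are jointly close to independent. Conditioning on $A_1 := \{g(j) \text{ useful}\}$ pins $x$ down deterministically on the anchor interval at $\Gamma$ but only mildly biases $x$ elsewhere, via the ``no pseudo-anchor disjoint from the anchor'' constraint, which is a high-probability event for random $x$. Consequently, for each $\ell \neq \Gamma$ whose neighborhood lies outside the anchor, the event that the trace-anchor is centered at the image of $\ell$ forces $x_\ell = 1$, $\ell$ to survive, and the first $m := \lfloor (2+c_0 p/4) C_9 \log K_2 \rfloor$ survivors on each side of $\ell$'s image to be zero; by near-uniformity this event has probability at most $O\!\left(\tfrac{p}{2} \cdot 2^{-2m}\right)$. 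For the $O(\log K_2)$ candidates $\ell$ sitting just outside the anchor, a separate but analogous estimate applies (the $1$ at $\Gamma$ must be absent from the relevant stretch, and the remaining random bits must align). Summing over the $O(K_2)$ candidates,
\[
P\!\left[\{g_{U^{(t)}}(\gamma_t) \neq \Gamma\} \cap A_3 \mid A_1\right] \le O\!\left(p \cdot K_2 \cdot 2^{-2m}\right),
\]
where $A_3 := \{t \text{ is } j\text{-good}\}$.

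To obtain the conditional probability in the lemma statement, I divide by $P[A_3 \mid A_1 \cap A_2]$, which I bound below by the probability that the anchor at $\Gamma$ itself produces the trace-anchor. That scenario requires the $1$ at $\Gamma$ to survive (probability $p$) and, since the anchor's zero-block contributes only about $2 C_9 \log K_2$ expected survivors per side while $m = (2+c_0 p/4) C_9 \log K_2$ zeros are needed, about $(c_0 p/4) C_9 \log K_2$ external survivors per side to happen to be zero. This has probability at least on the order of $p \cdot 2^{-(c_0 p/2) C_9 \log K_2}$. After the division, the factor $2^{-(c_0 p/2) C_9 \log K_2}$ in the numerator partially cancels, leaving a net conditional bound of order $K_2^{1-\Theta(C_9)}$, which is comfortably below $K_2^{-0.5}$ for $C_9$ taken sufficiently large (the paper has $C_9 \gg C_8 \gg 1$).

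The main obstacle is justifying the approximate uniformity under the full conditioning $A_1 \cap A_2$. In particular, $A_2$ (that $j$ is trace-useful) is a condition across all $T$ traces, not just $U^{(t)}$; however, by exchangeability and the fact that $A_2$ only requires a fraction of the traces to be $j$-good, conditioning on $A_2$ perturbs the marginal distribution of $U^{(t)}$ by only a constant factor. A secondary difficulty is the overlap region around the anchor, where several bits of $x$ are deterministic, requiring a careful but routine case analysis for the candidates $\ell$ there.
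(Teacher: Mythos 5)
There is a genuine gap, and it sits exactly where you flag it: the claim that conditioning on $\{g(j)\text{ useful},\, j\text{ trace-useful}\}$ (and on the alignment outcome that defines $\wt{I}_R^{(t)}$) ``only mildly biases'' the bits, or ``perturbs the marginal distribution of $U^{(t)}$ by only a constant factor,'' is not justified and is not true in the generality you need. The event ``$j$ is trace-useful'' is a function of \emph{all} $T$ traces \emph{and of the shared string} $x$; exchangeability among the traces only controls the expected fraction of $j$-good indices $t$ (which, incidentally, would give you the much better denominator bound $\mathbb{P}[t\ j\text{-good} \mid g(j)\text{ useful},\, j\text{ trace-useful}] \ge p/2$, rather than your $p\,2^{-(c_0p/2)C_9\log K_2}$ estimate, which conflates the single-trace event with trace-usefulness). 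It does nothing to control how the conditioning tilts the law of $x$ inside $[g(j)-K_2,g(j)+K_2]$: for example, conditioned on many traces being $j$-good, the string is overwhelmingly likely to have an extended zero run around $\Gamma$, an event whose probability is boosted by a factor polynomial in $K_2$, not by a constant. Your per-candidate bound $O(p\,2^{-2m})$ is obtained from bit-value uniformity of the retained positions near $\ell$, and that uniformity is precisely what the conditioning destroys a priori; since this is the entire source of smallness in your numerator, the argument as written does not close. Also, your numerator is computed conditioning only on $A_1$ while the denominator conditions on $A_1\cap A_2$, so the ratio is not the conditional probability in the statement.

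The paper avoids this trap by not relying on bit-value uniformity under the conditioning. It uses the structural consequence of usefulness -- no pseudo-anchor disjoint from the anchor, so every window of length $2p^{-1}C_9\log K_2$ away from the anchor contains more than $p^{-1}\log K_2$ ones -- and then splits on the length of the preimage $[g_{U^{(t)}}(a_t),g_{U^{(t)}}(b_t)]$ of the trace-anchor: if it is long, the many ones it must contain all have to be deleted, which is bounded by deletion-channel randomness via an explicit matching/injection of retention configurations \emph{within} the conditioned event (this is how the conditioning is handled rigorously, and it yields only $O(K_2^{-1.5})$ per candidate, summing to $K_2^{-0.5}$); if it is short, one has an upward large deviation for the number of retained bits in a short window, bounded by $(e/4)^{2C_9\log K_2}$ and compared against the lower bound on $\mathbb{P}(A)$ from Lemma \ref{useful&trace-useful}. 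To rescue your approach you would need an analogous device -- e.g., decomposing over the anchor location and a super-useful-type sub-event, or a matching argument on retention variables -- rather than an appeal to exchangeability; as it stands, the central uniformity claim is exactly the hard part of the lemma and is left unproved.
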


\begin{proof}
We split the proof of this claim into two cases. For ease, let $A$ denote the event that $g(j)$ is useful and $j$ is trace-useful. We more directly show $$\mathbb{P}\left[A\cap \{t \text{ is } j-\text{good}\} \cap \{g_{U^{(t)}}(\gamma_t) \not = \Gamma\}\right] \le \frac{1}{K_2^{0.5}}\mathbb{P}\left[A\cap\{t \text{ is } j-\text{good}\}\right].$$ If $t$ is a $j$-good trace, then there is a trace-anchor with interval $$[a_t,b_t] := \left[\gamma_t-(2+\frac{1}{2}c_0p)C_9\log K_2,\gamma_t+(2+\frac{1}{2}c_0p)C_9\log K_2\right].$$ 

\vs

The first case is that $g_{U^{(t)}}(b_t)-g_{U^{(t)}}(a_t) \ge 2p^{-1}C_9\log K_2$. Intuitively, this case corresponds to the case that we identified the remnants of a near pseudo-anchor that was not a pseudo-anchor due to having too many ones, and nearly all of the ones were deleted when a trace was taken. We start though with the general $$\mathbb{P}[g_{U^{(t)}}(\gamma_t) \not = \Gamma \hs | A]$$ $$ \le \sum_{z \in [g(j)-K_2,g(j)+K_2]\setminus \Gamma} \mathbb{P}[g_{U^{(t)}}(\gamma_t) = z \hs | \hs A] + \frac{e^{-K_2^{1-\beta}}}{\mathbb{P}(A)},$$ the second term coming from Lemma \ref{falsematch}. Since $\mathbb{P}(A) \ge \text{poly}(1/K_2)$, it suffices to show that each summand is $O(K_2^{-1.5})$ due to the situation of the first case. Intuitively, we expect an $O(K_2^{-1.5})$ bound since deleting nearly all of at least $1.5p^{-1}\log K_2$ ones has probability $O(K_2^{-1.5})$. But we must be careful, as we are conditioning on $A$, which might affect some things. So we define a matching within $A$. Specifically, we map each such configuration to the set of configurations where the achored $1$ is retained and the retention variables of the ones in the interval that's mapped to the trace anchored 1 are arbitrary. 

\vs

The second case is that $g_{U^{(t)}}(b_t)-g_{U^{(t)}}(a_t) < 2p^{-1}C_9\log K_2$. Intuitively, this case corresponds to the case that we identified the remnants of a near pseudo-anchor that was not a pseudo-anchor due to having too small a length, and the length didn't shrink (even close to) as expected when a trace was taken. In this case, $$\mathbb{P}\left[A\cap\{t \text{ is } j-\text{good}\}\cap\{g_{U^{(t)}}(b_t)-g_{U^{(t)}}(a_t) < 2p^{-1}C_9\log K_2\}\right]$$ $$ \le \mathbb{P}\left[x[j-K_2,j+K_2] \text{ has anchor}\cap\{\exists \nu \in [j-K_2,j+K_2] : \sum_{i=1}^{2p^{-1}C_0\log K_2} r_{\nu+i} > 4C_9\log K_2\}\right],$$ which by independence is $$\le 2K_22^{-4p^{-1}C_9\log K_2}2K_2(\frac{e}{4})^{2C_9\log K_2},$$ where we used the general bound $\mathbb{P}[X > 2\mathbb{E}(X)] \le (e/4)^{\mathbb{E}(X)}$ (see, for example, Theorem 8.1.12 of Alon-Spencer \cite{alonspencer}). And this is at most $$\frac{1}{K_2}\mathbb{P}[A]$$ by the lower bound on $\mathbb{P}(A)$ in Lemma \ref{useful&trace-useful} (since $c_0 < 1/100$).
\end{proof}

\vs

\section{Reconstruction Algorithm}

We may assume that $n \ge (1/\ep)^{C_{19}}$ (where $C_{19}$ is large), for otherwise, we could simply fully reconstruct the random $n$-bit string. With this assumption, we receive $\exp(C_{20}\sqrt[3]{\log(1/\ep)})$ traces and run the alignment algorithm ($C_{20}$ is chosen last).

\vs

Let $C_{10}$ be large. We begin by finding the first index $j_1$ in $U^{(1)}$ that is trace-useful and apply the algorithm to index $j_1$ to obtain a string $\widehat{x}_{j_1}$ of length $K_2^{C_{10}}$. Inductively, with $j_r$ chosen, we find the first index $j_{r+1} > j_r+pK_2^{C_{10}}+pK_2^{3C_{10}/4}$ in $U^{(1)}$ that is trace-useful and apply the algorithm to index $j_{r+1}$ to obtain a string $\widehat{x}_{j_{r+1}}$ of length $K_2^{C_{10}}$.

\vs

Say $j_1,\dots,j_m$ were the trace-useful indices that we reconstructed based on. We then output the concatenation $\widehat{x} := \widehat{x}_{j_1}\dots \widehat{x}_{j_m}$ (as our approximation to $x$).

\vs

\section{Analysis of Reconstruction Algorithm}

Let $\widehat{x}_{j_1},\dots,\widehat{x}_{j_m}$ be the outputted strings that we concatenate to get $\widehat{x}$. 

\vs

We present a sequence of insertions and deletions to transform $\widehat{x}$ into $x$. We show, with high probability, the number of insertions and deletions we make is at most $n/K_2^{0.1}$.

\vs

We first delete all $\widehat{x}_{j_i}$'s where $g_{U^{(1)}}(j_i) \le g_{U^{(1)}}(j_{i-1})+K_2^{C_{10}}+K_2^{C_{10}/10}$. Note with high probability, we delete at most $e^{-K_2^{C_{10}/100}}n$ of the $\widehat{x}_{j_i}$'s, since the expected number of pairs $(j,j')$ with $j' > j+pK_2^{C_{10}}+pK_2^{3C_{10}/4}$ and $g_{U^{(1)}}(j') \le g_{U^{(1)}}(j)+K_2^{C_{10}}+K_2^{C_{10}/10}$ is at most $e^{-K_2^{C_{10}/100}}n$. Therefore, with high probability, we delete at most $2K_2^{C_{10}}e^{-K_2^{C_{10}/100}}n \le \frac{1}{3}\frac{1}{K_2^{0.1}}n$ bits.   

\vs

Let $\widehat{x}_{r_1},\dots,\widehat{x}_{r_s}$ be the remaining $\widehat{x}_{j_i}$'s. Delete each of the $\widehat{x}_{r_i}$'s that were falsely reconstructed (in that the application of Lemma \ref{hppzgeneralization} was not successful). We then just need to show, for example, that at most a proportion of $\frac{1}{3}\frac{1}{K_2^{0.1}}$ of the $\widehat{x}_{r_i}$'s were falsely reconstructed. Of course, it suffices to work with the original $\widehat{x}_{j_i}$'s. By Markov, it suffices to show that the probability of a fixed $\widehat{x}_{j_i}$ having been falsely reconstructed is at most $\frac{1}{3K_2^{0.2}}$. And this follows from Lemmas \ref{notuseful&trace-useful},\ref{success}, and \ref{hppzgeneralization}. 

\vs

Now we have a subsequence of $x$ and with high probability deleted at most $\frac{1}{K_2^{0.1}}n$ bits of $x$. So it just suffices to show, that, with high probability, there are at most $\frac{1}{K_2^{0.1}}n$ bits missing from this subsequence. First note that, since $K_2^{3C_{10}/4} \ll K_2^{C_{10}}$, with high probability, there are at most $n/\text{poly}(K_2)$ bits missing due to the required waiting $pK_2^{C_{10}}+pK_2^{3C_{10}/4}$ indices before looking for the next trace-useful index. We now must control how long we wait after those required waiting times. By Lemma \ref{useful&trace-useful} and Markov, with high probability, it holds that the total weighting time until seeing the first trace-useful index after the required waiting time is at least $n/K_2^2$. We finish by controlling the waiting time from the first super-useful index (observed after the required waiting time) to the first super-useful index that maps to a trace-useful index. Let $\Delta_r$ denote the gap between the $r^{\text{th}}$ super-useful index and the $(r+1)^{\text{st}}$ super-useful index, and $Y_r$ denote the indicator of the event that the $r^{\text{th}}$ super-useful index did not map to a trace-useful index. Then, by Lemma \ref{useful&trace-useful}, Lemma \ref{notuseful&trace-useful}, and near independence, we have $\mathbb{E}[\Delta_r Y_r] \le \exp(-cT)$, so Markov implies $\mathbb{P}[\sum_r \Delta_r Y_r > \exp(-(c/2) T)n] \le \exp(-(c/2) Tn)$. Thus, combining all three waiting times, we see that we have at most $\frac{1}{3}\frac{1}{K_2^{0.1}}n$ missing bits with high probability, thereby completing the proof. 

\vs

\section{An Application to Coded Trace Reconstruction}

In \cite{bls}, it was proven that, for any $\epsilon > 0$, there is a set $S \sub \{0,1\}^n$ of $2^{(1-\ep)n}$ strings so that a constant number $T_\ep$ of traces suffice to reconstruct an arbitrary string chosen from $S$ (the player is assured that the unknown string is in $S$). They showed that one can take $T_\ep = \exp(C\sqrt[3]{\log (1/\ep)})$. In this section, we recover their result, with our set $S$ being a pseudo-random set of strings, any two of which have edit distance at least $\epsilon n$. We do not however make any guarantees that $S$ is efficiently encodable or decodable unlike in their paper.

\vs

\begin{theorem}
For any $\delta,q \in (0,1)$, there is some $C = C(\delta,q)$ so that for any $\ep > 0$ and $n \ge 1$, there is a subset $S \sub \{0,1\}^n$ of size $|S| \ge 2^{(1-\ep)n}$ so that any unknown $s \in S$ can be reconstructed from $\exp(C\sqrt[3]{\log(1/\ep)})$ independent traces. 
\end{theorem}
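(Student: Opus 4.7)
The plan is to combine Theorem \ref{main} with a greedy edit-distance packing. At a high level, I take $S$ to be a maximal $2\epsilon' n$-separated (in edit distance) subset of those strings on which the reconstruction function from Theorem \ref{main} works well, where $\epsilon' := \epsilon/(K\log(1/\epsilon))$ for a large constant $K = K(q)$ to be chosen. The decoder, on input $\vec U$, outputs the unique codeword within edit distance $\epsilon' n$ of the approximate reconstruction.

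First, apply Theorem \ref{main} with accuracy $\epsilon'$ and error probability $\delta/4$, obtaining a function $F$ and a number of traces $T = \exp(C_1\sqrt[3]{\log(1/\epsilon')}) = \exp(C_2\sqrt[3]{\log(1/\epsilon)})$ with
\[ \mathbb{E}_x\,\mathbb{P}_{\vec U}[d_e(x, F(\vec U)) \geq \epsilon' n] \leq \delta/4. \]
By Markov's inequality, the set $S_0 := \{x \in \{0,1\}^n : \mathbb{P}_{\vec U}[d_e(x, F(\vec U)) \geq \epsilon' n] \leq \delta\}$ satisfies $|S_0| \geq (3/4)\cdot 2^n$, and every $s \in S_0$ is approximately reconstructed by $F$ to within $\epsilon' n$ in edit distance with probability at least $1-\delta$.

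Next, extract $S \subseteq S_0$ greedily: iteratively pick any remaining $x \in S_0$, add it to $S$, and delete from $S_0$ every $y$ with $d_e(x,y) < 2\epsilon' n$. The standard bound $|\{y : d_e(x,y) \leq r\}| \leq 2^{O(r\log(n/r))}$, obtained by summing $\binom{n}{d_1}\binom{n+r}{d_2}2^{d_2}$ over $d_1 + d_2 \leq r$, gives
\[ |S| \geq \frac{(3/4)\cdot 2^n}{2^{O(\epsilon' n \log(1/\epsilon'))}}. \]
Since $\log(1/\epsilon') \leq 2\log(1/\epsilon)$ for small $\epsilon$, we have $\epsilon' \log(1/\epsilon') \leq 2\epsilon/K$, so taking $K$ large enough yields $|S| \geq 2^{(1-\epsilon)n}$. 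Decoding is then immediate: given traces $\vec U$ of a transmitted $s \in S$, set $\hat x := F(\vec U)$ and return the unique $s^* \in S$ (if any) with $d_e(\hat x, s^*) < \epsilon' n$. With probability at least $1-\delta$, $d_e(s, \hat x) < \epsilon' n$, so $s$ itself qualifies; uniqueness follows from the triangle inequality and the $2\epsilon' n$-separation of $S$, so the decoder recovers $s$.

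The main obstacle is the parameter calibration. Choosing $\epsilon' = \Theta(\epsilon)$ naively costs a factor $\log(1/\epsilon)$ in the exponent of the edit-distance ball volume and would yield only rate $1 - \Theta(\epsilon\log(1/\epsilon))$. The fix is to scale down to $\epsilon' = \Theta(\epsilon/\log(1/\epsilon))$, which still gives $\sqrt[3]{\log(1/\epsilon')} = \Theta(\sqrt[3]{\log(1/\epsilon)})$ and hence keeps the trace count at $\exp(C\sqrt[3]{\log(1/\epsilon)})$. Everything beyond this calibration is a routine packing-and-triangle-inequality argument.
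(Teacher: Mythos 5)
Your proof is correct (at the same level of rigor as the paper, which likewise restricts attention to small $\epsilon$ and glosses the degenerate regime $\epsilon n = O(1)$), and the overall skeleton — approximate reconstruction via Theorem \ref{main}, an edit-distance-separated code, nearest-codeword decoding — matches the paper's. The genuinely different part is how the average-case guarantee of Theorem \ref{main} is transferred to every individual codeword. The paper first runs a \emph{random} maximal packing of all of $\{0,1\}^n$ at radius $\epsilon^2 n$, asserts that a uniformly random codeword of the resulting random codebook is a uniformly random string, and then averages twice: it fixes a codebook with average success probability at least $0.95$ and keeps the half of its codewords with success probability at least $0.9$. You instead apply Markov's inequality directly to $x \mapsto \mathbb{P}_{\vec U}\left[d_e(x,F(\vec U)) \ge \epsilon' n\right]$ to obtain a set $S_0$ of at least $(3/4)2^n$ strings, each individually reconstructible to within $\epsilon' n$ with probability $1-\delta$, and then pack greedily and deterministically inside $S_0$, paying for the edit-ball volume $2^{O(\epsilon' n \log(1/\epsilon'))}$ explicitly. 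Your route is arguably cleaner: it sidesteps the paper's exact-uniformity claim for the random packing (delicate as stated, since edit balls around different centers have different sizes), it preserves the success probability $1-\delta$ rather than the fixed constants $0.95/0.9$, and it uses only the standard ball-volume bound that the paper's ``crude bound'' on $m$ implicitly needs anyway. One simplification: the calibration $\epsilon' = \epsilon/(K\log(1/\epsilon))$, which you describe as the main obstacle, is more elaborate than necessary — taking $\epsilon' = \epsilon^2$, as the paper does, makes the ball-volume exponent $O(\epsilon^2 n \log(1/\epsilon)) \ll \epsilon n$ automatic while only doubling $\log(1/\epsilon')$, so the trace count remains $\exp(C\sqrt[3]{\log(1/\epsilon)})$.
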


\begin{proof}
Fix $\ep > 0$ and $n \ge 1$. By choosing $C$ large enough, we may assume $\ep < 10^{-6}$ and $n \ge 10^6$, say.

\vs

Perform the following (random) algorithm. Choose a string $s_1 \in \{0,1\}^n$ uniformly at random, and remove from $\{0,1\}^n$ all strings that are within $\ep^2 n$ edit distance of $s_1$. For $k \ge 1$, with $s_1,\dots,s_k$ chosen, choose $s_{k+1}$ uniformly at random from the remaining strings (which will be those not within $\ep^2 n$ edit distance of any $s_j$) and remove all remaining strings that are within $\ep^2 n$ edit distance of $s_{k+1}$; stop the algorithm if $s_{k+1}$ cannot be chosen (due to there being no remaining strings). As a crude bound, since $\ep$ is small and $n$ is large, this algorithm generates $s_1,\dots,s_m$ with $m \ge 2\cdot 2^{(1-\ep)n}$. By ignoring some of the $s_j$'s, we may assume that always $m := 2\cdot 2^{(1-\ep)n}$ strings $s_i$ are outputted.

\vs

Now, let $A$ be the function guaranteed by Theorem \ref{main}; that is, $A$ takes in $T := \exp(C\sqrt[3]{\log(1/\ep)})$ traces and outputs a string that is within $\ep^2 n/2$ edit distance of the source string, with probability at least $0.95$. 

\vs

Let $\mc{D}$ be the distribution over tuples of strings $(s_1,\dots,s_m)$ produced by the algorithm. Note that choosing $(s_1,\dots,s_m)$ according to $\mc{D}$ and then choosing $s_j$ uniformly at random from $\{s_1,\dots,s_m\}$ is equivalent to choosing a string uniformly at random from $\{0,1\}^n$. Therefore, $$\subtext{s_1,\dots,s_m \sim \mc{D}}{\mathbb{P}}\hs\hs\hs \subtext{s_j \sim \{s_1,\dots,s_m\}}{\mathbb{P}}\hs\hs\hs\subtext{U^{(1)},\dots,U^{(T)} \sim s_j}{\mathbb{P}} \left[d_e\left(A(U^{(1)},\dots,U^{(T)}),s_j\right) < \ep^2 n/2\right] \ge 0.95.$$ So there is a choice of $(s_1,\dots,s_m)$ in the support of $\mc{D}$ so that $$\subtext{s_j \sim \{s_1,\dots,s_m\}}{\mathbb{P}}\hs\hs\hs\subtext{U^{(1)},\dots,U^{(T)} \sim s_j}{\mathbb{P}} \left[d_e\left(A(U^{(1)},\dots,U^{(T)}),s_j\right) < \ep^2 n/2\right] \ge 0.95.$$ Therefore, there is a subset $S \sub \{s_1,\dots,s_m\}$ of size $|S| \ge m/2$ so that for any $s_j \in S$, it holds that \begin{equation}\label{bagel} \subtext{U^{(1)},\dots,U^{(T)} \sim s_j}{\mathbb{P}} \left[d_e\left(A(U^{(1)},\dots,U^{(T)}),s_j\right) < \ep^2 n/2\right] \ge 0.90.\end{equation} Note $|S| \ge 2^{(1-\ep)n}$. We define a function $B : (\{0,1\}^{\le n})^T \to S$ as follows: for a given $\vec{U} := (U^{(1)},\dots,U^{(T)})$, let $\widehat{x} = A(\vec{u})$ and have $B(\vec{u})$ be the $s \in S$ that is closest to $\widehat{x}$ in edit distance (with ties broken arbitrarily). Since the edit distance between any two strings in $S$ is more than $\ep^2 n$, we're done by \eqref{bagel}.
\end{proof}

\vs

\section{Acknowledgments}

We would like to thank Nina Holden for helpful conversations. Z.C. would like to thank Ray Li for mentioning the connection to coded trace reconstruction.

\Addresses

\end{document}